%

\RequirePackage{etoolbox}
\csdef{input@path}{{style/}{graphics/}}
\documentclass[aap,MSNbibl,seceqn,dvips]{arximspdf}

%

\doi{10.1214/14-AAP1015} 
\volume{25}
\issue{2}
\pubyear{2015}
\firstpage{974}
\lastpage{985}

\makeatletter
\newcommand{\ind}{\mathbf{1}}

\newtheorem{teo}{Theorem}[section]
\newtheorem{lem}{Lemma}[section]
\newproclaim{defn}{Definition}[section]
\newproclaim{example}{Example}[section]
\newproclaim{alg}{Algorithm}[section]
\makeatother

\begin{document}
\begin{frontmatter}

\title{Approximation algorithms for the normalizing constant of Gibbs
distributions}
\runtitle{Approximation of partition functions}

\begin{aug}
\author[A]{\fnms{Mark}~\snm{Huber}\corref{}\ead[label=e1]{mhuber@cmc.edu}\ead[label=u1,url]{http://www.cmc.edu/pages/faculty/MHuber/}}
\runauthor{M. Huber}
\affiliation{Claremont McKenna College}
\address[A]{Department of Mathematical Sciences\\
Claremont McKenna College\\
850 Columbia Avenue\\
Claremont, California 91711\\
USA\\
\printead{e1}\\
\printead{u1}} 
\end{aug}

\received{\smonth{6} \syear{2012}}
\revised{\smonth{1} \syear{2014}}

%
\begin{abstract}
Consider a family of distributions $\{\pi_\beta\}$
where $X \sim\pi_\beta$ means that
$\mathbb{P} (X = x) = \exp(-\beta H(x)) / Z(\beta)$. Here $Z(\beta)$
is the proper normalizing constant, equal to
$\sum_x \exp(-\beta H(x))$. Then $\{\pi_\beta\}$ is known as
a Gibbs distribution, and $Z(\beta)$ is
the partition function. This work presents a new method for
approximating the partition function to a specified level of relative
accuracy using only a number of samples,
that is,
$O(\ln(Z(\beta)) \ln(\ln(Z(\beta))))$ when $Z(0) \geq1$.
This is a sharp improvement
over previous, similar approaches that used a much more complicated algorithm,
requiring $O(\ln(Z(\beta)) \ln(\ln(Z(\beta)))^5)$ samples.
\end{abstract}

%
\begin{keyword}[class=AMS]
\kwd[Primary ]{68Q87}
\kwd{65C60}
\kwd[; secondary ]{65C05}
\end{keyword}
\begin{keyword}
\kwd{Integration}
\kwd{Monte Carlo methods}
\kwd{cooling schedule}
\kwd{self-reducible}
\end{keyword}

\end{frontmatter}

\section{Introduction}

The central idea of Monte Carlo methods is that the ability to sample
from certain distributions gives a means for estimating the value of
an integral or sum. This paper presents
a new method for using samples to approximate
a broad class of sums coming from Gibbs distributions that is
faster than previously-known methods.

%
\begin{defn} $\{\pi_\beta\}_{\beta\in{\mathbb R}}$
is a \textit{Gibbs distribution with parameter
$\beta$} over finite state space
$\Omega$ if there exists a \textit{Hamiltonian function}
$H(x)\dvtx \Omega\rightarrow\mathbb{R}$
such that
for $X \sim\pi_\beta$,
\[
\mathbb{P}(X = x) = \exp\bigl(-\beta H(x)\bigr) / Z(\beta),
\]
where $Z(\beta) = \sum_{x \in\Omega} \exp(-\beta H(x))$ is called
the \textit{partition function} of the distribution.
\end{defn}

The partition function can be difficult to compute, even when
dealing with simple
problems.

%
\begin{example}[(The Ising model)]\label{EXPising}
Given a graph $G = (V,E)$, let
$\Omega= \{-1,1\}^V$, and $H(x) = -\sum_{\{i,j\} \in E} \ind(x(i) = x(j))$,
where $\ind(\cdot)$ is the indicator function that is 1 if the argument
is true and 0 if it is false.
Then the Gibbs distribution with this Hamiltonian is called
the \textit{Ising model}. Finding $Z(\beta)$ for arbitrary graphs is
a \#P-complete problem~\cite{jerrums1993}.
\end{example}

A vast literature has arisen devoted to finding ways to generate
random variables from Gibbs distributions; see, for
instance, \cite{huber2010a,huber2004a,metropolisrrtt1953,swendsenw1986} or
\cite{brooksgjm2011} for an overview.
For
the Ising model, Jerrum and Sinclair~\cite{jerrums1993} give an
algorithm for approximately sampling from $\pi_\beta$ in polynomial time
for $\beta> 0$. Propp and Wilson~\cite{proppw1996}
give an algorithm for the Ising model
that seems to run efficiently when $\beta> 0$ is at or below
a cutoff known as the critical value.

Once an effective method for
obtaining approximate or perfect samples from the target Gibbs distribution
exists, the question becomes:
what is the best way of using those samples to approximate
$Z(\beta)$?

%
\begin{defn}
Say that $\mathcal A$ is an
$(\varepsilon,3/4)$-\textit{randomized approximation algorithm}
for $Z(\beta)$ if
it outputs value $\hat Z(\beta)$ such that
\[
\mathbb{P} \biggl(\frac{1}{1 + \varepsilon} \leq\frac{\hat Z(\beta
)}{Z(\beta)} \leq1 + \varepsilon
\biggr) \geq3/4.
\]
\end{defn}

Here $\varepsilon\geq0$
controls the relative error between the approximation and
the true answer.
The $3/4$ on the right-hand side
can be made arbitrarily close to 1 by repeating the
algorithm and taking the median of the resulting output.

\subsection{Previous work} The first step in building such an
approximation algorithm is importance sampling. For most Gibbs distributions,
calculating $Z(0)$ is straightforward, and it is easy to
generate samples from $\pi_0$. For the Ising model, $\pi_0$ is just
the uniform distribution over $\{-1,1\}^V$, and
$Z(0) = 2^{\# V}$. With a draw $X \sim\pi_0$
in hand, let
%
%
\begin{equation}
\label{EQNw} W = \exp\bigl(-\beta H(X)\bigr).
\end{equation}
Then
\[
\mathbb{E}[W] = \frac{\sum_{x \in\Omega} \exp(-\beta H(x)) \exp(0)}{Z(0)} = \frac{Z(\beta)}{Z(0)},
\]
making $W \cdot Z(0)$ an unbiased estimator of $Z(\beta)$.

The relative performance of this Monte Carlo estimate is controlled by the
relative variance, the square of the coefficient of variation.
For a random variable $X$ with finite second moment,
$\mathbb{V}_{\mathrm{rel}}(X) = [\mathbb{E}(X^2)/\mathbb{E}(X)^2] - 1$.
Hence for the random variable $W$ as in (\ref{EQNw}),
%
%
\begin{equation}
\label{EQNrelvarw}\qquad \mathbb{V}_{\mathrm{rel}}(W) = -1 + \frac{\sum_{x \in\Omega} \exp(-\beta H(x))^2}{Z(0)}\cdot
\frac{Z(0)^2}{Z(\beta)^2} = -1 + \frac{Z(2 \beta) Z(0)}{Z(\beta)^2}.
\end{equation}
There are two main issues with this relative variance:
\begin{longlist}[(2)]
\item[(1)]{For problems like the Ising model, this last
ratio can be exponentially large in the input, making the method untenable.}
\item[(2)]{The relative variance involves the value of $Z(2\beta)$,
outside the
interval of interest $[0,\beta]$. Typically, larger values of $\beta$
make sampling from $\pi_\beta$ more difficult. This presents a serious
impediment to the method.}
\end{longlist}
The first problem can be dealt with by using the \textit{multistage sampling}
method of
Valleau and Card~\cite{valleauc1972}.
In this approach, a sequence of
$\beta$ values $0 = \beta_0 < \beta_1 < \beta_2 < \cdots< \beta
_\ell= \beta$
are introduced, called a \textit{cooling schedule}. Then
\[
\frac{Z(\beta)}{Z(0)} = \frac{Z(\beta_1)}{Z(\beta_0)} \cdot \frac{Z(\beta_2)}{Z(\beta_1)} \cdots
\frac{Z(\beta_\ell)}{Z(\beta
_{\ell- 1})}.
\]
Each of the individual factors in the product on the right can then be
estimated separately and then multiplied to give a final estimate.
Fishman calls an estimate of this form a \textit{product estimator}
\cite{fishman1994}, page 437.

It is straightforward to calculate the mean and
relative variance of a product
estimator in terms of the mean and relative variance of the individual
factors. The following result is a simplified form of a result that appears
on page 136 of~\cite{dyerf1991}.

%
\begin{lem}[(\cite{dyerf1991})]
\label{LEMproductrelvar}
For $P = \prod P_i$ where the $P_i$ are independent,
\[
\mathbb{E}[P] = \prod\mathbb{E}[P_i], \qquad
\mathbb{V}_{\mathrm{rel}}(P) = -1 + \prod\bigl(1 + \mathbb{V}_{\mathrm
{rel}}(P_i)
\bigr).
\]
\end{lem}
%

Let $q = \ln(Z(\beta)/Z(0))$, and suppose
$H(x) \in\{0,\ldots,n\}$.
Next, Bez\'akov\'a et al.~\cite{bezakovasvv2008}
introduce a fixed cooling schedule with two pieces, the first where
the parameter value grows linearly and the second where it grows exponentially,
\[
0,\frac{1}{n},\frac{2}{n},\ldots,\frac{k}{n},
\frac{k\gamma}{n}, \frac{k\gamma^2}{n},\ldots,\frac{k\gamma^t}{n},
\]
where\vspace*{1pt} $k = \lceil q \rceil$ and $\gamma= 1 + 1/q$. With this fixed
cooling schedule, they give an $(\varepsilon,3/4)$-approximation algorithm
that uses $O(q^2 (\ln n)^2)$ samples in the worse case.

By using an adaptive cooling schedule, it is possible to do better.
In~\cite{stefankovicvv2009}, \u{S}tefankovi\u{c}, Vempala and Vigoda
introduce an adaptive cooling schedule.
Their algorithm
is highly complex, and they are interested primarily in the
asymptotic order of the running time
rather than a practical implementation. Their
$(\varepsilon,3/4)$-approximation algorithm uses, at most,
%
%
\begin{equation}
\label{EQNsvv} 10^8 q\bigl(\ln(n) + \ln(q)\bigr)^5
\varepsilon^{-2}\bigr)
\end{equation}
samples on average from the target distribution.

In~\cite{huber2010b}, the Huber and Schott introduce a general technique
for finding normalizing constants of sums and integrals
called \textsf{TPA}. When applied to the specific problem area of
Gibbs distributions, the running time
for an $(\varepsilon,3/4)$-approximation algorithm becomes $O(q^2)$.
While this algorithm is much simpler to implement than the
method of {S}tefankovi\u{c}, Vempala and Vigoda~\cite{stefankovicvv2009}, it has a worse running time, asympototically.

\subsection{Main result}
The multistage idea solves the issue of $Z(2\beta)Z(0)/\break Z(\beta)^2$ being
too large, but fails to solve the issue of the variance depending on
$Z(2\beta)$. Dealing with this leads to several of the $\ln$ factors
in~\cite{stefankovicvv2009}. In this work a new method is introduced, the
\textit{paired product estimator}, which has a variance only involving
quantities within $[0,\beta]$. The result is an algorithm where the
overal variance can be analyzed precisely. This allows for the construction
of an approximation
algorithm much simpler than that found in \cite{stefankovicvv2009}, and
which requires far fewer samples.

%
\begin{teo}
\label{THMintro}
Suppose $n \geq4$ and $\varepsilon\leq1/10$.
When $H(x) \in\{0,1,\ldots,n\}$ or $\{0,-1,-2,\ldots,-n\}$,
the new method is an $(\varepsilon,3/4)$-approximation algorithm that uses
only
%
%
\begin{equation}
\label{EQNsamples} \qquad (q + 1)\bigl[5 + \bigl(2 + \ln(2n)\bigr) \bigl(14.9 \ln\bigl(100
\bigl(2 + \ln(2n)\bigr) (q+1)\bigr) + 48.2 \varepsilon^{-2}\bigr)\bigr]
\end{equation}
and draws from the Gibbs distribution on average.
\end{teo}

It is, of course, possible to derive an upper bound on the number of
samples used
when $n < 4$ or $\varepsilon> 1/10$; however, adding these assumptions
makes the presentation cleaner.

The requirement that $H(x) \in\{0,\ldots,n\}$ or $\{-n,\ldots,0\}$ is
so that $H(x)$ does not change sign, which is a necessary condition
for the algorithm. Suppose that
$H(x) \in\{a,a+1,\ldots,a + n\}$ where $a$ is known. Then
using $H'(x) = H(x) - a$ gives the same Gibbs distribution as with $H$, so
drawing samples from $H'$ is no more difficult than drawing
from $H$ and $H'(x) \in\{0,\ldots,n\}$. However,
the partition function is different. If $Z(\beta)$ was the original
partition function, and $Z_{H'}(\beta)$ the new, then
$Z_{H'}(\beta) = \exp(\beta a) Z(\beta)$. Hence
$q'$ for $H'$ satisfies $q' = q + a\beta$. Theorem~\ref{THMintro}
can then be applied.

Section~\ref{SECalgorithm} describes the overall structure of the algorithm
and shows how to obtain a good cooling schedule.
Section~\ref{SECanalysis}
then analyzes the relative variance of the pieces of the algorithm
in order to prove Theorem~\ref{THMintro}.

\section{The algorithm}\label{SECalgorithm}

Let $q = \ln(Z(0)/Z(\beta))$. Then to obtain an approximation within
a factor
of $1 + \varepsilon$ of $Z(0)/Z(\beta)$, it is necessary to obtain an
approximation of $q$ within an additive factor of $\ln(1 + \varepsilon)$.
The main algorithm consists of the following pieces:
\begin{longlist}[(3)]
\item[(1)]{obtain an initial estimate of $q$;}
\item[(2)]{obtain a well-balanced cooling schedule;}
\item[(3)]{use the well-balanced schedule with the paired product estimator.}
\end{longlist}

Let $z(\beta) = \ln(Z(\beta))$.
Then \textit{well-balanced} means that there exists $\eta\geq0$ such that
$|z(\beta_{i+1}) - z(\beta_{i})| \leq\eta$ for all $i$.

The first two pieces will be accomplished using \textsf{TPA}, introduced
in~\cite{huber2010b}. To use \textsf{TPA} for Gibbs distributions on
parameter values $[0,\beta]$, it is necessary that $H(x)$ be either
always nonnegative or always nonpositive.

In the Ising model example shown earlier, $H(x) \leq0$, and so
$Z(\beta)$ is an increasing function of $\beta$.
In this case,
\textsf{TPA} is an algorithm that generates a random set of parameter
values in
the interval from 0 to $\beta$ by taking samples from $\pi_b$ for
various values of $b \in[0,\beta]$.
Then
the output of \textsf{TPA} is a
Poisson point process (PPP) of rate 1 in $[z(0),z(\beta)]$; see Section~2
of~\cite{huber2010b}.

%
\begin{alg}
\label{ALGtpagibbs}
\textsf{TPA} for Gibbs distributions with $H(x) \leq0$
takes as input a value $\beta> 0$ together with an oracle for generating
random samples from $\pi_b$ for $b \in[0,\beta]$,
and returns a set of values
$0 < b_1 < b_2 < \cdots< b_\ell< b$ such that
$\{z(b_1),\ldots,z(b_\ell)\}$ forms a Poisson point process of rate 1 on
the interval $[z(0),z(\beta)]$. It operates as follows:
\begin{longlist}[(3)]
\item[(1)]{start with $b$ equal to $\beta$ and $B$ equal to the empty set;}
\item[(2)]{draw a random sample $X$ from $\pi_b$, and draw $U$ uniformly
from $[0,1]$;}
\item[(3)]{let $b = b - \ln(U)/H(X)$, unless $H(X) = 0$, in which
case set
$b = -\infty$;}
\item[(4)]{if $b > 0$, then add $b$ to the set $B$, and go back to
step 2.}
\end{longlist}
\end{alg}

The number of samples drawn by \textsf{TPA} will equal 1 plus a Poisson
random variable with mean $q$ \cite{huber2010b}, pages 3--4.
The output of Algorithm~\ref{ALGtpagibbs}
can be used in several different ways.
When \textsf{TPA} is run
$k$ times and the output sets combined, and
the result is a Poisson point process on
$[z(0),z(\beta)]$ of rate $k$.

It is even possible to obtain rates that are fractional. To obtain rate
$k$ where $k$ is not an integer, first run \textsf{TPA} $\lceil k \rceil
$ times.
Then for each point of the process, keep it independently with
probability $k / \lceil k \rceil$. Otherwise discard it entirely.
This procedure, known as \textit{thinning}, enables creation of a PPP
of any positive rate, which will simplify the analysis later; see \cite{resnick1992}, page 320, for more on thinning.

After a PPP of rate $k$ has been generated, the number of points in the
process has a Poisson distribution with mean
$k(z(\beta) - z(0))$. This gives a way of initially getting an estimate
of $z(\beta) - z(0)$ that (by choosing $k$ high enough) has a 99\% chance
of being within a factor of 2 of the correct value.

Once that is accomplished, \textsf{TPA} is run, this time with an even larger
value of $k$ based on the estimate from the first step. Because the $z(b)$
values form a Poisson point process, the difference between successive $z(b)$
values will be an exponential random variable, so if $b'$ is the $d$th point
following $b$, then $z(b') - z(b)$ will have a gamma (Erlang) distribution
with shape parameter $d$ and rate parameter $k$. By making $k$ and $d$ large
enough, this will be tightly concentrated around its mean value of $d/k$
for all such differences.
The result is a set of parameter values $\{\beta_i\}$
that are well balanced.

Call $[\beta_i,\beta_{i+1}]$ interval $i$.
Now each $z(\beta_{i+1}) - z(\beta_i)$ will be estimated
independently using
the paired product estimator. This works as follows. For each interval~$i$, let $m_i = (\beta_i + \beta_{i+1})/2$ be the midpoint of the
interval, and
$h_i = m_i - \beta_i = \beta_{i+1} - m_i$ be
the half length of an interval.
Draw $X \sim\pi_{\beta{i}}$ and $Y \sim\pi_{\beta_{i+1}}$. Then set
\[
W_i = \exp\bigl(-h_i H(X)\bigr), \qquad V_i
= \exp\bigl(h_i H(Y)\bigr).
\]

Then
\[
\mathbb{E}[W_i] = \frac{\sum\exp(-\beta_i H(x))\exp(-h_i
H(x))}{Z(\beta_i)} = \frac{\sum\exp(-m_i H(x))}{Z(\beta_i)} =
\frac{Z(m_i)}{Z(\beta_i)}.
\]
Similarly, $\mathbb{E}[V_i] = Z(m_i)/Z(\beta_{i+1})$.
Therefore, $W_i$ can
be used to estimate the drop $z(m_i) - z(\beta_i)$, and
$V_i$ can estimate the drop $z(\beta_{i+1}) - z(m_i)$.

Now we have the relative variance calculation.
\begin{eqnarray*}
\mathbb{V}_{\mathrm{rel}}(W_i) &=& \frac{\mathbb{E}[W_i^2]}{\mathbb
{E}[W_i]^2} - 1 = -1 +
\frac{\sum\exp(-\beta_i H(x)) \exp(-\delta_i H(x))^2} {
Z(\beta_i)} \cdot \frac{Z(\beta_i)^2}{Z(m_i)^2}
\\
&=& -1 + \frac{Z(\beta_{i+1}) Z(\beta_i)}{Z(m_i)^2}\qquad\mbox{since } \beta_i + 2
\delta_i = \beta_{i+1}.
\end{eqnarray*}
A similar calculation shows that $\mathbb{V}_{\mathrm{rel}}(V_i) =
\mathbb{V}_{\mathrm{rel}}(W_i)$, and
now the variance of our estimators for interval $i$ only involves
$Z(b)$ values for $b$ that fall in interval $i$.

Let $W$ be the product of the $W_i$ over all intervals $i$, and
$V$ be the product of the~$V_i$. Then the final estimate of
$Z(\beta)/Z(0)$ is $W / V$. This is not quite an unbiased estimator, but
it is true that $\mathbb{E}[W]/\mathbb{E}[V] = Z(\beta)/Z(0)$. If
both $W$ and $V$
are tightly concentrated around their means, then $W/V$ will be close to
$Z(\beta)/Z(0)$. To get that tight concentration, in the next section
it is shown that the relative variance of~$W$ (and $V$) is small as long
at the $\beta$ values form a well-balanced schedule.

With that small relative variance,
it is possible to repeatedly draw independent, indentical copies of $W$
to get a sample average $\bar W$ which is tightly concentrated about
its mean. (The same is true for $V$ as well.) The
following algorithm incorporates these ideas.

\begin{alg}[(Paired product approximation algorithm)]
\label{ALGppaa}
The input is a value $\beta> 0$ together with an oracle for generating
samples from $\pi_b$ for $b \in[0,\beta]$. The output is an approximation
for $Z(\beta)/Z(0)$.
\begin{longlist}
\item[(1)]{Run \textsf{TPA} 5 times to get an estimate of
$q = \ln(Z(\beta)/Z(0))$ that is at least $q/2$ with probability 99\%.}
\item[(2)]{Run \textsf{TPA} $k$ times to obtain a set of parameter values.
Sort these values and then keep every $d$th successive value.
Add parameter
values $0$ and $\beta$, and label the result
$0 = \beta_0 < \beta_1 < \cdots< \beta_\ell= \beta$.}
\item[(3)]{Repeat the following
$\lceil2e \sqrt{10} ((1 + \varepsilon)^{1/2} - 1)^{-2} \rceil$
times:
for each $i$, draw $X_i \sim\pi_{\beta_i}$, let $W_i = \exp(-\delta
_i H(X_i))$
and\vspace*{2pt} $V_i = \exp(\delta_i H(X_{i+1}))$, $W = \prod W_i$ and $V = \prod V_i$.
Take the sample average of the $W$ values to get $\bar W$, and the
sample average of the $V$ values to get $\bar V$.}
\item[(4)]{The estimate of $Z(\beta) / Z(0)$ is $\bar W/\bar V$.}
\end{longlist}
\end{alg}

Note that $((1 + \varepsilon)^{1/2} - 1)^{-2} \approx4\varepsilon^{-2}$.
It is
necessary to use this more complex expression because the final estimator
is the ratio of $W$ and $V$; see the proof of Theorem~\ref{THMrunningtime}.
Algorithm~\ref{ALGppaa} can be run for any values of $d$ and $k$. The next
section shows how to choose them properly to make Algorithm~\ref{ALGppaa}
an \mbox{$(\varepsilon,3/4)$-}approximation algorithm.

\section{Analysis}\label{SECanalysis}

In this section the following theorem is shown.

\begin{teo}
\label{THMmain}
In Algorithm~\ref{ALGppaa}, let $\hat q_1$ be the size of the
Poisson point process created with 5 runs of \textsf{TPA} in step 1.
Let
\[
d = \bigl\lceil22 \ln\bigl(100\bigl(2 + \ln(2n)\bigr) (\hat q_1 +
1/2)\bigr) \bigr\rceil\quad \mbox{and}\quad k = (2/3)d \bigl[2 + \ln(2n)\bigr].
\]
Then the algorithm output is within $1 + \varepsilon$ of $Z(\beta)/Z(0)$
with probability at least~3/4.
\end{teo}

Let $q = \ln(Z(\beta)/Z(0))$.
The proof breaks into three parts. The first shows that by running
\textsf{TPA} 5 times, the probability that $\hat q_1 + 1/2 < (1/2)q$ is
at most
1\%. The second part shows that with the choice of $k$, the
probability that the schedule is not well balanced is at most 4\%. Finally,
the third part shows that the third step of the algorithm produces
$\bar W$ and $\bar V$ that are both within $1 + \tilde\varepsilon/2$ of
their respective means with probability at most 20\%. The union bound
on the
probability of failure is then $1\% + 4\% + 20\% = 25\%$, as desired.

\subsection{The initial estimate \texorpdfstring{$\hat{q}_1$}{$q_1$}}
Recall that Algorithm~\ref{ALGtpagibbs} has output that is a Poisson
point process with rate 1. Let $k_1$ denote the number of times that
\textsf{TPA} is run and the output combined. Then the new PPP has a
rate of
$k_1$. Therefore the
number of points in the PPP
is Poisson distributed with
mean $k_1(z(\beta) - z(0))$. The following lemma concerning Poisson
random variables then shows that $\hat q_1 + 1/2$ is at least $1/2$ of its
mean with probability at least 99\%.

%
\begin{lem}
\label{LEMpoisson}
Let $X$ have Poisson distribution with mean $\mu$. Then
$\mathbb{P}(X < \mu/2) \leq2(\pi\mu)^{-1/2}(2/e)^{\mu/2}$.
\end{lem}

\begin{pf}
Suppose $\mu/2 = \lceil\mu/2 \rceil$. Then
\[
\mathbb{P}(X < \mu/2) = \exp(-\mu)\sum_{i \leq\mu/2}
\frac{\mu
^i}{i!} \leq \exp(-\mu) 2 \frac{\mu^{\mu/2}}{(\mu/2)!}.
\]
The last inequality comes from the fact that each term in the sum is
at least twice the previous term. The Stirling bound
$i! > \sqrt{2\pi i}(i/e)^i$ gives
$\mathbb{P}(X \leq\mu/2) \leq2(\pi\mu)^{-1/2}(2/e)^{\mu/2}$. Now
suppose $\mu/2 \neq\lceil\mu/2 \rceil$. Let
$\mu' = 2\lceil\mu/2 \rceil$.
\[
\mathbb{P}(X < \mu/2) \leq\mathbb{P}\bigl(X \leq\mu' / 2\bigr)
\leq2\bigl(\pi\mu'\bigr)^{-1/2}(2/e)^{\mu'/2} \leq2(\pi
\mu)^{-1/2}(2/e)^{\mu}.
\]\upqed
\end{pf}

Suppose step 1 runs $k_1$ repetitions of \textsf{TPA}.
Then $\hat q_1$ has a Poisson distribution with mean
$k_1 q$. If $q \leq1$, then it is always true
that $\hat q_1 + 1/2 \geq(1/2)q$.
If $q > 1$, then setting $k_1 = 5$ and using Lemma~\ref{LEMpoisson}
makes the probability of
failure below~$1\%$.

\subsection{The well-balanced schedule}\label{SBSwellbalanced}
Now consider the second step in Algorithm~\ref{ALGppaa}. First, run
\textsf{TPA} $k$ times to get a set $B$ that is a PPP of rate $k$ on
the interval $[z(0),z(\beta)]$.
Since $B$ is a PPP of rate $k$,
if $b < b'$ are values in $B$ such that there are exactly $d - 1$
values in
$(b,b')$, then $z(b') - z(b)$ has a gamma distribution with parameters
$d$ and $k$. This is equivalent to saying
$z(b') - z(b)$ has the distribution of the sum of
$d$ independent exponential random variables each with rate $k$.
Hence the
moment generating function of $z(b') - z(b)$ is $[k/(k - t)]^d$. Let
$t$ and $\eta$ be nonnegative real numbers, then
\begin{eqnarray*}
&& \mathbb{P}\bigl(z\bigl(b'\bigr) - z(b) \geq\eta\bigr)
\\
&&\qquad =
\mathbb{P}\bigl(\exp\bigl(t\bigl(z\bigl(b'\bigr) - z(b)\bigr)\bigr) \geq\exp(
\eta t)\bigr)
\\
&&\qquad = \bigl[k/(k-t)\bigr]^d \exp(-\eta t)\qquad\mbox{by
Markov's inequality}
\\
&&\qquad = (\eta k /d)^d \exp(-\eta k + d)\qquad\mbox{by setting } t = k - d
/ \eta.
\end{eqnarray*}
%
On the other hand, for $t > 0$, multiplying by $-t$ and exponentiating gives
\begin{eqnarray*}
&& \mathbb{P}\bigl(z\bigl(b'\bigr) - z(b) \leq\eta/2\bigr)
\\
&&\qquad =
\mathbb{P}\bigl(\exp\bigl(-t\bigl(z\bigl(b'\bigr) - z(b)\bigr)\bigr) \geq\exp(-
\eta t/2)\bigr)
\\
&&\qquad = \bigl[k/(k+t)\bigr]^d \exp(\eta t/2)\qquad\mbox{by
Markov's inequality}
\\
&&\qquad = \bigl(\eta k/(2d)\bigr)^d \exp(- \eta k / 2 + d)\qquad\mbox{by
setting } t = 2d/\eta- k.
\end{eqnarray*}
%
%
So if $d = (3/4)\eta k$, then from the union bound
\[
\mathbb{P}\bigl(\eta/2 \leq z\bigl(b'\bigr) - z(b) \leq\eta\bigr)
\geq1 - \bigl[\exp(-1/3)\cdot4/3\bigr]^d - \bigl[\exp(1/3)\cdot2/3
\bigr]^d.
\]
For the PPP,
the chance that $z(b) - z(b') \in[\eta/2,\eta]$
for the first $2\eta^{-1}(z(\beta) - z(0))$
intervals to the left of $\beta$ is (again by the union bound)
at least $1 - 2\eta^{-1}(z(\beta) - z(0)) 2 [\exp(-1/3)\cdot4/3]^d$.
Making
\[
d \geq\frac{\ln(0.04(4\eta^{-1}(z(\beta)-z(0)))^{-1})}{-(1/3) +
\ln(4/3)} = \frac{\ln(100\eta^{-1}(z(\beta) - z(0)))}{1/3 - \ln(4/3)}
\]
would make this probability
at least 96\%. However, $q = z(\beta) - z(0)$ is unknown. What is known
(from step 1 of Algorithm~\ref{ALGppaa} is $2(\hat q_1 + 1/2)$ has a
96\% chance of being at least $q$. Since
$(1/3 - \ln(4/3))^{-1} = 21.905,\ldots,$ setting
\[
d = \bigl\lceil22 \ln\bigl(200\eta^{-1}(\hat q + 1/2)\bigr) \bigr
\rceil
\]
and $k = (4/3)d/\eta$
makes the chance that step 2 fails to find a schedule where
$z(b) - z(b') > 1$ for any interval at most 4\%.

\subsection{Choosing \texorpdfstring{$\eta$}{$eta$}}
The next question to consider is the size of $\eta$. The value
of $\eta$ will be used to control the overall relative variance of
the product estimators $W$ and $V$.
%
For the $i$th interval $[\beta_i,\beta_{i+1}]$, let
$m_i \stackrel{\mathrm{def}}{=}(\beta_i + \beta_{i+1}) /2$ be the
midpoint of the interval.
Let $\delta_i$ be the difference between the $y$-coordinate of the
midpoint of the interval
secant line and the function value at the midpoint of the interval.
That is,
\[
\delta_i \stackrel{\mathrm{def}} {=}\frac{z(\beta_{i+1}) + z(\beta
_i)}{2} -
z(m_i).
\]
From (\ref{EQNrelvarw}), $\mathbb{V}_{\mathrm{rel}}(W_i) = \exp
(2\delta_i) - 1$. Since
the relative variance is always nonnegative, this implies that
$\delta_i \geq0$ and so the function $z$ is convex.

From Lemma~\ref{LEMproductrelvar},
%
%
\begin{equation}
\label{EQNbound} \mathbb{V}_{\mathrm{rel}}(W) = -1 + \prod\bigl(1 +
\exp(2\delta_i) - 1\bigr) = -1 + \exp \Bigl(\sum2\delta_i \Bigr).
\end{equation}

So controlling the overall relative variance is a matter of
bounding $\delta_i$ for each interval $i$.
The key idea in the bound comes from~\cite{stefankovicvv2009}, although
they use it in a very different fashion. The idea
is that when $\delta_i$ is large, the derivative of $z$
sharply increases.

%
\begin{lem}
For the $i$th interval $[\beta_i,\beta_{i+1}]$ with
$z(\beta_{i+1}) - z(\beta_i) = \eta_i$,
\[
\frac{z'(\beta_{i+1})}{z'(\beta_i)} \geq\exp(4\delta_i / \eta_i).
\]
\end{lem}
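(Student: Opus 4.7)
The plan is to exploit the convexity of $z(\beta) = \ln Z(\beta)$, which has already been observed above (and also follows from $z''(\beta) = \mathrm{Var}_{\pi_\beta}(H(X)) \geq 0$). Because $z$ is convex, $z'$ is nondecreasing, so on the subinterval $[\beta_i, \beta_{i+1}]$ the endpoint derivatives sandwich the two half-secant slopes: by the mean value theorem there exist $c_1 \in (\beta_i, m_i)$ and $c_2 \in (m_i, \beta_{i+1})$ with
\[
z'(\beta_i) \;\leq\; z'(c_1) \;=\; \frac{z(m_i) - z(\beta_i)}{m_i - \beta_i}, \qquad z'(c_2) \;=\; \frac{z(\beta_{i+1}) - z(m_i)}{\beta_{i+1} - m_i} \;\leq\; z'(\beta_{i+1}).
\]

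I would then rewrite those two slopes using $\eta_i$ and $\delta_i$. From the definition of $\delta_i$, $z(m_i) - z(\beta_i) = \eta_i/2 - \delta_i$ and $z(\beta_{i+1}) - z(m_i) = \eta_i/2 + \delta_i$, while the two half-widths $m_i - \beta_i$ and $\beta_{i+1} - m_i$ are identical and so cancel when one forms the ratio $z'(\beta_{i+1})/z'(\beta_i)$. In the algorithm's regime $H$ has constant sign and $z$ is monotone on $[\beta_i, \beta_{i+1}]$, whence $z(m_i) \geq z(\beta_i)$ and $0 \leq 2\delta_i \leq \eta_i$; the denominator is therefore nonnegative and
\[
\frac{z'(\beta_{i+1})}{z'(\beta_i)} \;\geq\; \frac{\eta_i + 2\delta_i}{\eta_i - 2\delta_i}.
\]
(In the degenerate case $2\delta_i = \eta_i$, monotonicity forces $z$ to be constant on the left half-interval, so $z'(\beta_i) = 0$ and the lemma holds vacuously.)

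To convert the rational bound into the exponential form stated, I would set $x = 2\delta_i/\eta_i \in [0,1)$ and apply the elementary identity
\[
\ln\!\left(\frac{1+x}{1-x}\right) \;=\; 2\sum_{k=0}^{\infty}\frac{x^{2k+1}}{2k+1} \;\geq\; 2x,
\]
which rearranges to $(1+x)/(1-x) \geq \exp(2x)$. Substituting $x = 2\delta_i/\eta_i$ yields $(\eta_i + 2\delta_i)/(\eta_i - 2\delta_i) \geq \exp(4\delta_i/\eta_i)$, and combining this with the previous display gives the claim.

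The only real obstacles are bookkeeping: verifying the sign conventions so that $\eta_i > 0$ and $z'(\beta_i) \geq 0$ on the interval, and handling the flat-half-interval boundary case. The substantive step is the one-line convexity sandwich which brackets $z'(\beta_i)$ and $z'(\beta_{i+1})$ between the two half-secant slopes; the passage to the exponential is then immediate from the $\mathrm{arctanh}$ series.
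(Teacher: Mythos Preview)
Your proof is correct and follows essentially the same route as the paper: both use convexity of $z$ to sandwich $z'(\beta_i)$ and $z'(\beta_{i+1})$ by the two half-secant slopes, rewrite those slopes as $(\eta_i/2 \mp \delta_i)/(m_i-\beta_i)$, take the ratio, and then invoke $(1+x)/(1-x) \geq e^{2x}$ for $x = 2\delta_i/\eta_i$. The only differences are cosmetic---you cite the mean value theorem and monotonicity of $z'$ explicitly where the paper simply says ``since $z$ is convex,'' and you supply the $\operatorname{arctanh}$ series and the degenerate-case discussion that the paper leaves implicit.
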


\begin{pf}
Let $m_i = (\beta_i + \beta_{i+1})/2$ be the midpoint of interval $i$,
and $\eta_i = z(\beta_{i+1}) - z(\beta_{i})$ be the change in the
$z$ function over the interval.
Since $z$ is convex, the slope at $\beta_i$ is at most
$[z(m_i) - z(\beta_i)]/[m_i - \beta_i]$. On the other hand, the slope at
$\beta_{i+1}$ is at least $[z(\beta_{i+1}) - z(m_i)]/[\beta_{i+1} - m_i]$.
Since $m_i$ is\vadjust{\goodbreak} the midpoint of the interval,
$m_i - \beta_i = \beta_{i+1} - m_i$ and
\[
\frac{z'(\beta_{i+1})}{z'(\beta_{i})} \geq\frac{z(\beta_{i+1}) - z(m_i)} {
z(m_i) - z(\beta_{i})} = \frac{\eta_i/2 + \delta_i}{\eta_i/2 - \delta_i} =
\frac{1 + 2\delta_i/\eta_i}{1 - 2\delta_i/\eta_i} \geq\exp(4\delta_i/\eta_i).
\]\upqed
\end{pf}

%
\begin{lem}
\label{LEMbound}
For a cooling schedule over $[0,\beta]$
with $z(\beta_{i+1}) - z(\beta_{i}) \leq\eta$ for all $i$,
\[
\mathbb{V}_{\mathrm{rel}}(W) = \mathbb{V}_{\mathrm{rel}}(V) \leq \cases{2, &
\quad$z'(\beta) < 1/2$,
\vspace*{3pt}\cr
\bigl(2z'(\beta)
\bigr)^{\eta/2}, &\quad$z'(0) \geq1/2$,
\vspace*{3pt}\cr
2 e^{\eta}
\bigl[2 z'(\beta)\bigr]^{\eta/2}, &\quad$z'(0) <
1/2 \leq z'(\beta)$.}
\]
For $n \geq4$ and $\eta= 2 / [2 + \ln(2n)]$, regardless of
$z'(0)$ and $z'(\beta)$,
\[
\mathbb{V}_{\mathrm{rel}}(W) = \mathbb{V}_{\mathrm{rel}}(V) \leq2e.
\]
\end{lem}

\begin{pf} Recall that $\mathbb{V}_{\mathrm{rel}}(W) \leq\exp(2\sum_i \delta_i)$ so
the goal is to bound $\sum_i \delta_i$.

Consider a cooling schedule
$0 = \beta_0 < \beta_1 < \cdots< \beta_\ell= \beta$. It is well known
that $z'(\beta)$ is just $\mathbb{E}[-H(X)]$ where $X \sim\pi_\beta$
\[
z'(\beta) = \frac{d}{d\beta} \ln\bigl(Z(\beta)\bigr) =
\frac{Z'(\beta
)}{Z(\beta)} = \frac{\sum_x -H(x)\exp(-\beta H(x))}{Z(\beta)} = \mathbb{E}\bigl[-H(X)\bigr].
\]
\begin{longlist}[\textit{Case} III]
\item[\textit{Case} I:] $z'(\beta) < 1/2$. Then $H(x) \leq-1 \Longrightarrow-H(x) \geq
1$ so
\begin{eqnarray*}
&& \frac{\sum_{x\dvtx H(x) \leq-1} -H(x) \exp(-\beta H(x))}{Z(\beta)} \leq \frac{1}{2}
\\
&&\qquad \Rightarrow\quad \frac{\sum_{x\dvtx H(x) \leq-1} \exp(-\beta H(x))}{Z(\beta)} \leq
\frac
{1}{2}
\\
&&\qquad \Rightarrow\quad \frac{\sum_{x\dvtx H(x) = 0} \exp(-\beta H(x))}{Z(\beta)} \geq \frac{1}{2}
\\
&&\qquad \Rightarrow\quad \frac{Z(0)}{Z(\beta)} \geq\frac{1}{2}.
\end{eqnarray*}
Hence $z(\beta) - z(0) \leq\ln(2)$ which means
$\sum_i 2\delta_i \leq\ln(2)$ and $\exp(\sum_i 2\delta_i) \leq2$.

\item[\textit{Case} II:] $z'(0) \geq1/2$. Then $2z'(\beta) \geq z'(\beta)/z'(0)$, and
from the last lemma
\[
\frac{z'(\beta)}{z'(0)} = \frac{z'(\beta_1)}{z'(\beta_0)}\cdots \frac{z'(\beta_{\ell})}{z'(\beta_{\ell-1})} \geq\prod
_i \exp(4 \delta_i /
\eta_i).
\]
Raising to the $\eta/ 2$ power then finishes this case.

\item[\textit{Case} III:] $z'(0) < 1/2 \leq z'(\beta)$. Since $z'$ is continuous,
let $a \in[0,\beta]$
be the parameter value where
$\mathbb{E}[-H(X)] = 1/2$ for $X \sim\pi_a$, and suppose $a$ is in
the $j$th
interval $[\beta_j,\beta_{j+1}]$. As in case~I, $Z(\beta_j)/Z(\beta
_0) \leq2$.
As in case~II,\vadjust{\goodbreak} $\prod_{i > j} \exp(4\delta_i) \leq[2z'(\beta
)]^{\eta/2}$.
Since $2\delta_j \leq\eta$, this means that the combined relative
variance is
at most $2 e^{\eta} [2 z'(\beta)]^{\eta/2}$.

Since $z'(\beta) = \mathbb{E}[-H(X)]$ for $X \sim\pi_\beta$, and
$X \leq n$,
$z'(\beta) \leq n$. Hence if $\eta/2 \leq1/[2 + \ln(2n)]$, then
$e^\eta[2z'(\beta)]^{\eta/2} \leq e$.\quad\qed
\end{longlist}\noqed
\end{pf}

\begin{pf*}{Proof of Theorem~\ref{THMmain}}
Using the value of $d$ from Section~\ref{SBSwellbalanced} and
Lemma~\ref{LEMbound} gives that the relative variance for an instance
of $W$ (or $V$) is at most~$2e$. All that remains is to analyze the third
step of Algorithm~\ref{ALGppaa}. It is easy to verify that
if $\bar W$ is the sample average of $r$ independent, identically distributed
(i.i.d.)
instances of $W$, then $\mathbb{V}_{\mathrm{rel}}(\bar W) = \mathbb
{V}_{\mathrm{rel}}(W)/r$.
Let $\tilde\varepsilon= (1 + \varepsilon)^{1/2} - 1$. For
$\lceil2e\sqrt{10} \tilde\varepsilon^{-2} \rceil$ i.i.d. draws of $W$,
$\mathbb{V}_{\mathrm{rel}}(\bar W) \leq\tilde\varepsilon^{-2}/10$.

Chebyshev's inequality says that for a random variable $X$ with
finite relative variance,
$\mathbb{P}((1 - \varepsilon)\mathbb{E}[X] \leq X \leq(1 + \varepsilon) X)
\geq1 - \mathbb{V}_{\mathrm{rel}}(X) \varepsilon^2$. Hence
\[
\mathbb{P}\bigl((1 + \tilde\varepsilon)^{-1} \mathbb{E}[W] \leq\bar W
\leq (1 + \tilde\varepsilon) \mathbb{E}[W]\bigr) \geq1 - 1/10.
\]
Similarly,
$\mathbb{P}((1 + \tilde\varepsilon)^{-1} \mathbb{E}[V] \leq\bar V
\leq
(1 + \tilde\varepsilon) \mathbb{E}[V]) \geq1 - 1/10$.

Therefore, the chance that step 1 successfully
gives a basic estimate of $\ln(Z(\beta)/Z(0))$, step 2 creates a well-balanced
schedule and step 3 gives $\bar W$ and $\bar V$ both within a factor of
$(1 + \tilde\varepsilon)$ of their respective means is at
least $1 - 1/100 - 4/100 - 1/10 - 1/10 = 75\%$ by the union bound.

If both $\bar W$ and $\bar V$ are within $1 + \tilde\varepsilon$ of their
means, then $\bar W/\bar V$ is within $(1 + \tilde\varepsilon)^2 = 1 +
\varepsilon$
of $\mathbb{E}[\bar W]/\mathbb{E}[\bar V] = Z(\beta)/Z(0)$,
completing the proof.
\end{pf*}

\subsection{The running time of the basic algorithm}\label{SBSrunningtime}
How many samples does Algorithm~\ref{ALGppaa} take on average?

\begin{teo}
\label{THMrunningtime}
When $n \geq4$, and $\varepsilon\leq1/10$,
Algorithm~\ref{ALGppaa} takes on average at most
\[
(q + 1)\bigl[5 + \bigl(2 + \ln(2n)\bigr) \bigl(14.9 \ln\bigl(100\bigl(2 +
\ln(2n)\bigr) (q+1)\bigr) + 48.2 \varepsilon^{-2}\bigr)\bigr]
\]
samples. For fixed $\varepsilon$
the number of samples is $O(q[\ln(n)(\ln(q) + \ln(\ln(n)))])$.
\end{teo}

\begin{pf}
A run of \textsf{TPA} uses a number of samples that is one plus a Poisson
random variable with mean $z(\beta) - z(0)$, so on average $q + 1$ samples.
So step 1 takes $5q + 5$ samples on average. From the concavity of
the $\ln$ function and Jensen's inequality, the second step takes at
most
\[
\bigl\lceil(2/3) \bigl(2 + \ln(2n)\bigr)\bigr\rceil\bigl\lceil22\ln\bigl(100\bigl(2 + \ln(2n)
\bigr) (q + 1)\bigr) \bigr\rceil q
\]
samples on average.
This is bounded above by
\[
q\bigl[14.9 \bigl(2 + \ln(2n)\bigr) \ln\bigl(100\bigl(2 + \ln(2n)\bigr) (q+1)
\bigr)\bigr].
\]
The resulting schedule has on average
at most $q/(d/k) + 1 = (2/3)[2 + \ln(2n)] q + 1$
intervals in it, and so the third step
of the algorithm\vadjust{\goodbreak} generates a number of samples that (on average) is at
most
\[
(2e\sqrt{10}) (2/3) \bigl(2 + \ln(2n)\bigr) (q + 1) \bigl((1 +
\varepsilon)^{1/2} - 1\bigr)^{-2}.
\]
When $\varepsilon\leq1/10$, $(1 + \varepsilon)^{1/2} - 1 \geq\varepsilon/2.05$,
so the number of samples in this section can be bounded by
\[
48.2\bigl(2 + \ln(2n)\bigr) (q + 1)\varepsilon^{-2}.
\]\upqed
\end{pf}




\printaddresses

\end{document}